\newtheorem{theorem}{Theorem}
\newtheorem{theoremO}{Theorem}
\newtheorem*{corollary*}{Corollary}
\newcommand {\SC} {{\mathbb C}}  \newcommand {\SD} {{\mathbb D}}  \newcommand {\SN} {{\mathbb N}}
\newcommand {\SR} {{\mathbb R}}  \newcommand {\ST} {{\mathbb T}}
  \newcommand{\vt}{\vartheta}  \newcommand{\la}{\lambda}
  \newcommand{\vp}{\varphi}  
\newcommand{\cP}{{\mathcal P}}
\newcommand{\be}{\begin{equation}}
\newcommand{\ee}{\end{equation}}
\newcommand{\bea}{\begin{eqnarray}}
\newcommand{\eea}{\end{eqnarray}}
\begin{document}
\author[I. Efraimidis]{Iason Efraimidis}
\address{Departamento de Matem\'aticas, Universidad Aut\'onoma de
Madrid, 28049 Madrid, Spain}
\email{iason.efraimidis@uam.es}
\subjclass[2010]{30C45, 30C50}
\keywords{Functions with positive real part, coefficient inequalities}
\date{November 6, 2015}

\title[On coefficient inequalities for functions with positive real part]{A generalization of Livingston's coefficient inequalities for functions with positive real part}

\begin{abstract}
For functions $p(z) = 1 + \sum_{n=1}^\infty p_n z^n$ holomorphic in the unit disk, satisfying $ {\rm Re}\, p(z) > 0$, we generalize two inequalities proved by Livingston \cite{Li69, Li85} and simplify their proofs. One of our results states that $|p_n -w p_k p_{n-k}|\leq 2\max\{1, |1-2w|\}, w\in\SC$. Another result involves certain determinants whose entries are the coef\mbox{}ficients $p_n$. Both results are sharp. As applications we provide a simple proof of a theorem of Brown \cite{B10} and various inequalities for the coefficients of holomorphic self-maps of the unit disk.  
\end{abstract}
\maketitle

\section{Introduction}

Let $\cP$ denote the class of functions of the form $p(z) = 1 + p_1 z + p_2 z^2 + \ldots$ which are analytic in the unit disk $\SD = \{z\in\SC : |z|<1\}$ and satisfy $ {\rm Re}\, p(z) > 0$ for all $z \in \SD$. As early as 1911 Carath\'eodory proved that coefficients of functions in $\cP$ satisfy $|p_n|\leq 2$ (Theorem \ref{TA} below). Livingston \cite{Li69} proved that \mbox{$|p_n - p_k p_{n-k}|\leq2$,} for all $0\leq k\leq n$. He used this inequality in his study of multivalent close-to-convex functions, while more applications were later found in \cite{BT}, \cite{LiZl} and \cite{M2}. In this note we generalize this inequality by finding the sharp bound of \mbox{$|p_n -w p_k p_{n-k}|$,} $w\in\SC$, in Theorem \ref{T1}. In particular, the bound $2$ is still valid whenever $w$ lies in the disk $\{w:|1-2w|\leq 1\}$. 

For $w\in\SC$ and $p\in\cP$ we define the $(k+1)\times(k+1)$ determinant
\begin{equation*}
A_{k,n}(w)= \left|
\begin{array}{cccccc}
p_{n+k} & p_{n+k-1} & p_{n+k-2} &  \ldots & p_{n+1} & p_n   \\
wp_1     & 1               & 0               &  \ldots & 0            & 0       \\
wp_2     & wp_1         & 1               &  \ldots & 0            & 0       \\
\vdots    & \vdots        &   \vdots      & \ddots & \vdots     & \vdots\\
wp_{k-1}& wp_{k-2}   & wp_{k-3}   &  \ldots & 1            &   0      \\
wp_k     & wp_{k-1}   & wp_{k-2}    &  \ldots & wp_1      &   1
\end{array} \right|.
\end{equation*}
Livingston \cite{Li85} defined this for $w=1$ and proved that $|A_{k,n}(1)|\leq 2$. In Theorem \ref{T2} we find the sharp bound of $|A_{k,n}(w)|$ for all $w\in\SC$. When no confusion arises we will suppress $w$ and write $A_{k,n}$ for $A_{k,n}(w)$. Here are some examples of initial $A_{k,n}$'s: 
\begin{align*}
A_{0,n} &= \, p_n, \qquad A_{1,n} = p_{n+1}-wp_1p_n, \\ \qquad\;
A_{2,n} &= \, p_{n+2} - wp_1p_{n+1} - wp_2p_n +w^2 p_1^2 p_n.
\end{align*}

In order to fix the notation let $n\in\SN$ and denote by $U_n = \{ e^{2k\pi i/n} : k=1,2,\ldots,n\}$ the set of $n$-th roots of unity. For $n=0$ we understand $U_0$ as $\ST= \partial\SD$. Also, for a set $E\subset \SC$ and a number $a\in\SC$ we write $aE=\{az : z\in E\}$.

The Herglotz representation \cite[p.22]{Du} asserts that for every $p \in \cP$ there is a unique probability measure $\mu$ supported on $\ST$, such that 
$$
p(z) = \int_\ST \frac{1+\la z}{1-\la z} d\mu (\la), \quad z \in \SD.
$$
We call $\mu$ the \emph{Herglotz measure} of $p$ and write $supp(\mu)$ for its support. One can readily see that the coefficients satisfy $p_n = 2 \int_\ST \la^n d\mu(\la)$. 

We now state Carath\'eodory's Theorem \cite[p.41]{Pom} and our two main theorems \ref{T1} and \ref{T2}.

\begin{theoremO} \label{TA}
If $p\in\cP$ then $|p_n|\leq 2$ for all $n\geq1$. For a fixed $n$, equality holds if and only if $supp(\mu) \subseteq e^{i\vp} U_n$ for some $\vp \in [0,2\pi)$.
\end{theoremO}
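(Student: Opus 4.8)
The plan is to derive both assertions directly from the Herglotz representation already recorded above, bypassing any power-series estimate. Since $p_n = 2\int_\ST \la^n\, d\mu(\la)$, and $|\la^n| = 1$ on $\ST$ while $\mu$ is a probability measure, the triangle inequality for integrals gives at once
$$
|p_n| = 2\left|\int_\ST \la^n\, d\mu(\la)\right| \leq 2\int_\ST |\la^n|\, d\mu(\la) = 2\int_\ST d\mu = 2,
$$
which settles the bound for every $n\geq 1$.

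For the equality statement I would analyze the equality case of this same triangle inequality. Equality in $\left|\int_\ST \la^n\, d\mu\right| \leq \int_\ST |\la^n|\, d\mu$ forces the integrand $\la^n$ to have constant argument $\mu$-almost everywhere; since $|\la^n|=1$, this means $\la^n = e^{i\psi}$ for $\mu$-a.e. $\la$ and some fixed $\psi\in[0,2\pi)$. The solutions of $\la^n = e^{i\psi}$ on $\ST$ are exactly the $n$ points $e^{i\psi/n}\ze$ with $\ze\in U_n$, that is, the set $e^{i\vp}U_n$ with $\vp=\psi/n$. Because the complementary set $\{\la\in\ST : \la^n\neq e^{i\psi}\}$ is open and carries no $\mu$-mass, it is disjoint from $supp(\mu)$, and therefore $supp(\mu)\subseteq e^{i\vp}U_n$.

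Conversely, if $supp(\mu)\subseteq e^{i\vp}U_n$ then every $\la$ in the support satisfies $\la^n = e^{in\vp}$, whence $p_n = 2\int_\ST e^{in\vp}\, d\mu = 2e^{in\vp}$ and so $|p_n|=2$; thus the support condition is also sufficient.

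I expect the only delicate point to be the passage from the almost-everywhere identity $\la^n = e^{i\psi}$ to a genuine containment of $supp(\mu)$. This requires recalling that the support is the complement of the largest $\mu$-null open set and that $\{\la : \la^n\neq e^{i\psi}\}$ is open; with that observation in hand the inequality and the converse are both routine.
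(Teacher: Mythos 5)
Your proof is correct. Note that the paper does not prove Theorem~A at all: it is Carath\'eodory's classical theorem, quoted with a reference to Pommerenke's book. Your argument via the Herglotz formula $p_n=2\int_\ST \la^n\,d\mu(\la)$ is the standard one and is exactly in the spirit of the paper's own methods --- the proof of Theorem~1, for instance, runs the same triangle-inequality estimate on the Herglotz integral and then invokes Theorem~A's equality clause. Your handling of the equality case is sound: equality in $\bigl|\int_\ST \la^n\,d\mu\bigr|\le\int_\ST|\la^n|\,d\mu$ with the integral nonzero does force $\la^n=e^{i\psi}$ for $\mu$-a.e.\ $\la$, and since $\{\la\in\ST:\la^n\ne e^{i\psi}\}$ is open and $\mu$-null it is disjoint from $supp(\mu)$, giving $supp(\mu)\subseteq e^{i\psi/n}U_n$; the converse computation $p_n=2e^{in\vp}$ is immediate. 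The only point worth flagging is the one you already flag yourself, namely the passage from an a.e.\ identity to a statement about the support, and you resolve it correctly by using that the support is the complement of the largest open null set.
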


\begin{theorem} \label{T1}
If $p\in\cP$ and $w\in \SC$ then
$$
|p_n - w p_k p_{n-k}| \leq 2\max\{1, |1-2w|\}
$$
 for all $1\leq k \leq n-1$.

Let $\mu$ be the Herglotz measure of $p$. In the case $|1-2w|<1$, equality holds if and only if $p_k=0$ and $supp(\mu) \subseteq e^{i\vp} U_n,$ for some $\vp \in [0,2\pi)$. In the case $|1-2w|>1$, equality holds if and only if $supp(\mu) \subseteq e^{i\vt} U_k \cap e^{i\vp} U_n$, for some $\vt,\vp \in [0,2\pi)$. In the case $|1-2w|=1$, if $supp(\mu)$ consists of one point then equality holds.
\end{theorem}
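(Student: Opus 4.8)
The plan is to pass to the Herglotz measure and reduce everything to a statement about the trigonometric moments $c_m = \int_\ST \la^m\, d\mu = p_m/2$ of the probability measure $\mu$, for which $c_0 = 1$ and $|c_m|\le 1$. Since the claimed bound equals $2\max\{1,|1-2w|\}$, it suffices to prove $|c_n - 2w c_k c_{n-k}|\le \max\{1,|1-2w|\}$. First I would write $c_n - 2wc_kc_{n-k} = \big(c_n - c_kc_{n-k}\big) + (1-2w)c_kc_{n-k}$ and observe that, because $\la^n = \la^k\la^{n-k}$, the first bracket is a ``covariance'':
\[
 c_n - c_kc_{n-k} = \int_\ST (\la^k - c_k)(\la^{n-k}-c_{n-k})\, d\mu(\la).
\]
Cauchy--Schwarz together with $\int_\ST |\la^k - c_k|^2\, d\mu = 1-|c_k|^2$ (and likewise for the exponent $n-k$) then gives $|c_n - c_kc_{n-k}|\le (1-|c_k|^2)^{1/2}(1-|c_{n-k}|^2)^{1/2}$. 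Writing $s=|c_k|$, $t=|c_{n-k}|$ and $W=|1-2w|$, the triangle inequality reduces the theorem to the elementary claim
\[
 g(s,t):=\sqrt{(1-s^2)(1-t^2)}+Wst \le \max\{1,W\},\qquad s,t\in[0,1].
\]

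This elementary inequality splits into two cheap estimates. If $W\le 1$ then $g(s,t)\le \sqrt{(1-s^2)(1-t^2)}+st\le 1$ by Cauchy--Schwarz applied to the vectors $(\sqrt{1-s^2},s)$ and $(\sqrt{1-t^2},t)$; if $W\ge 1$ then from $(s-t)^2\ge 0$ one gets $\sqrt{(1-s^2)(1-t^2)}\le 1-st$, whence $g(s,t)\le (1-st)+Wst\le W$. This disposes of the inequality, and I expect the whole bound to be short; the real work lies in the equality discussion.

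For equality I would track which of the three steps---Cauchy--Schwarz for the covariance, the triangle inequality, and the estimate for $g$---are tight, and then translate the resulting moment conditions into support conditions via Theorem~\ref{TA}. When $W>1$, tightness of $g$ forces $s=t=1$; by Theorem~\ref{TA} this means $\la^k$ and $\la^{n-k}$ are $\mu$-a.e.\ constant, equivalently $supp(\mu)\subseteq e^{i\vt}U_k\cap e^{i\vp}U_n$ (the remaining two steps being then automatically tight, since $s=t=1$ makes the covariance vanish). When $W<1$, tracing the same equalities forces $s=t=0$, i.e.\ $c_k=0$; and then $|c_n-2wc_kc_{n-k}|=|c_n|=1$ forces $supp(\mu)\subseteq e^{i\vp}U_n$, which is the stated condition. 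Here I would note the consistency relation $c_{n-k}=e^{in\vp}\overline{c_k}$ valid on $e^{i\vp}U_n$, so that $c_k=0$ indeed returns $c_{n-k}=0$.

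The borderline case $W=1$ is genuinely softer: the equality set of $g$ is the whole diagonal $s=t$, so there I would only verify the asserted sufficient condition, namely that a single point mass $\mu=\delta_\la$ yields $c_n-2wc_kc_{n-k}=(1-2w)\la^n$, of modulus $1$. The main obstacle is thus not the inequality but this case-by-case conversion of the tightness conditions $|c_k|\in\{0,1\}$ into the geometric support statements, keeping the three regimes and the degenerate $W=1$ boundary cleanly separated.
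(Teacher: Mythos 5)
Your proof is correct, and it takes a genuinely different route from the paper's. The paper folds everything into a single integrand: it writes $|p_n-wp_kp_{n-k}|\le 2\int_\ST|\la^k-wp_k|\,d\mu$, applies Cauchy--Schwarz once, and lands on the closed form $2\bigl(1+(|w|^2-{\rm Re}\,w)|p_k|^2\bigr)^{1/2}$, which depends only on $|p_k|$ and is finished off by Theorem~\ref{TA}. You instead split off the covariance $c_n-c_kc_{n-k}=\int_\ST(\la^k-c_k)(\la^{n-k}-c_{n-k})\,d\mu$, bound it by $\sqrt{(1-s^2)(1-t^2)}$, and reduce to the elementary two-variable inequality $\sqrt{(1-s^2)(1-t^2)}+Wst\le\max\{1,W\}$. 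Your intermediate bound is symmetric in $s=|c_k|$ and $t=|c_{n-k}|$ and is in fact pointwise at least as sharp as the paper's (maximizing your $g(s,t)$ over $t$ recovers exactly $\sqrt{1+(W^2-1)s^2}$). This symmetry pays off in the equality analysis: for $W>1$ you read off $s=t=1$ directly from tightness of $g$, whereas the paper, having only $|p_k|=2$ from its bound, needs the extra step $p_{n-k}=e^{-ik\vt}p_n$ to conclude $|p_n|=2$; for $W<1$ both arguments force $p_k=0$ and then invoke Theorem~\ref{TA}. One small point worth spelling out in your $W<1$ equality case: tightness of $g$ first gives only $st=0$, and you then need the observation that $s=0$, $t\ne0$ yields $g=\sqrt{1-t^2}<1$ to conclude $s=t=0$; this is implicit in your write-up and easily supplied. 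The $W=1$ boundary is handled identically in both proofs (only the one-point-mass sufficiency is claimed).
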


\begin{theorem} \label{T2}
If $p\in\cP$ and $w\in\SC$ then
$$
|A_{k,n}(w)|\leq 2\max\{1, |1-2w|^k\}
$$
for all $k\geq0$ and $n\geq1$.

Let $\mu$ be the Herglotz measure of $p$. In the case $|1-2w|<1$, equality holds if and only if $supp(\mu) \subseteq e^{i\vp} U_{n+k},$ for some $\vp \in [0,2\pi)$ and $p_1=p_2=\ldots=p_k=0$. In the case $|1-2w|\geq1$, if $supp(\mu)$ consists of one point then equality holds.
\end{theorem}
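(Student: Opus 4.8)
The plan is to turn the determinant into a single Taylor coefficient, then bound that coefficient using the Herglotz measure together with the Schur function attached to $p$. First I would expand $A_{k,n}(w)$ by a Schur complement. The lower–right $k\times k$ block is a lower-triangular Toeplitz matrix with symbol $q(z):=1+w\sum_{m\ge1}p_mz^m=1+w(p(z)-1)$, so its inverse is the lower-triangular Toeplitz matrix with symbol $1/q(z)=\sum_{j\ge0}c_jz^j$. The clean point is that applying this inverse to the first column reproduces $-c_j$, so the cofactor expansion collapses to
\[
A_{k,n}(w)=\sum_{j=0}^{k}c_j\,p_{n+k-j}=[z^{n+k}]\frac{p_{\ge n}(z)}{q(z)},\qquad p_{\ge n}(z):=\sum_{m\ge n}p_mz^m .
\]
Because $p_{\ge n}/q$ vanishes to order $n$ at the origin, $A_{k,n}(w)$ is genuinely the coefficient $[z^{n+k}]$ of this function; inserting $p_m=2\int_\ST\la^m\,d\mu$ also yields $A_{k,n}(w)=2\int_\ST\la^n C_k(\la)\,d\mu(\la)$ with $C_k(\la)=[z^k]\{q(z)(1-\la z)\}^{-1}$.

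Next I would pass to the Schur function $\om=(p-1)/(p+1)$, which satisfies $\om(0)=0$ and $|\om(z)|\le|z|$. In these terms $1/q=(1-\om)/(1+\beta\om)$ and $p/q=(1+\om)/(1+\beta\om)$, where $\beta:=2w-1$, so $|1-2w|=|\beta|$. The only singularities of $1/q$ occur where $1+\beta\om=0$, i.e.\ where $|\om(z)|=1/|\beta|$, which by Schwarz forces $|z|\ge 1/|\beta|$. This produces exactly the dichotomy of the statement: if $|\beta|\le1$ the function $p_{\ge n}/q$ is analytic on all of $\SD$, whereas if $|\beta|>1$ its nearest singularity sits at radius $1/|\beta|$, so a coefficient of index $n+k$ can be amplified by a factor of order $|\beta|^{k}$ (the index $n$ being absorbed by the zero of order $n$).

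I would then prove $|A_{k,n}(w)|\le2\max\{1,|\beta|^k\}$ by estimating $[z^{n+k}](p_{\ge n}/q)$ while keeping the cancellation that is present in the $\mu$-average. This is where the main obstacle lies: the pointwise quantity $C_k(\la)$ can exceed $\max\{1,|\beta|^k\}$ at individual points of $\mathrm{supp}\,\mu$, so the triangle-inequality bound $2\int_\ST|C_k|\,d\mu$ overcounts and does \emph{not} give the sharp constant. To handle this I would reduce, by density and the continuity of $A_{k,n}$ in $(p_1,\dots,p_{n+k})$, to the case where $\om$ is a finite Blaschke product (rational $p$); there $p_{\ge n}/q$ is an explicit rational function analytic in $|z|<\min\{1,1/|\beta|\}$, with poles on $\{1+\beta\om=0\}$, and I would extract the coefficient by a contour integral on $|z|=\rho$ with $\rho\uparrow\min\{1,1/|\beta|\}$, using the boundary behaviour $|\om|=1$ on $\ST$ (equivalently $\mathrm{Re}\,p=0$ a.e.) to pin the constant to $2$ when $|\beta|\le1$ and to $2|\beta|^k$ when $|\beta|\ge1$.

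Finally I would establish sharpness and the equality cases by direct computation. For a point mass $\mu=\delta_{\la_0}$ one has $1/q=(1-\la_0z)/(1+\beta\la_0z)$, whence $c_j=-2w(-\beta)^{j-1}\la_0^{j}$ and a short geometric summation gives $A_{k,n}(w)=2\la_0^{n+k}(1-2w)^{k}$, of modulus $2|\beta|^k$; this yields the sufficiency claimed for $|\beta|\ge1$. For $|\beta|<1$ the bound to attain is $2$, and imposing $p_1=\dots=p_k=0$ forces $c_1=\dots=c_k=0$, so that $A_{k,n}(w)=p_{n+k}$ and Carath\'eodory's Theorem~\ref{TA} gives $|A_{k,n}(w)|=2$ precisely when $\mathrm{supp}\,\mu\subseteq e^{i\vp}U_{n+k}$, matching the stated equality condition. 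I expect the delicate step throughout to be the sharp-constant estimate of the single coefficient in the regime $|\beta|>1$, where the amplification by the singularity at radius $1/|\beta|$ must be shown to be exactly $|\beta|^{k}$ and no larger.
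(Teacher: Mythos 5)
Your reduction of the determinant is correct and worth keeping: the Schur complement against the unitriangular Toeplitz block does give $A_{k,n}(w)=\sum_{j=0}^k c_j p_{n+k-j}$ with $\sum c_jz^j=1/q$, $q=1+w(p-1)=(1+\beta\om)/(1-\om)$, and (unlike the generating-function identity $Q_kp^*=1+2z^{k+1}v_k$ used in the paper's alternative proof, which needs $n\geq k+1$) your derivation is valid for all $n\geq1$. Your point-mass computation $A_{k,n}=2\la_0^{n+k}(1-2w)^k$ is also correct and settles sharpness. But the heart of the theorem --- the inequality $|A_{k,n}|\leq 2\max\{1,|\beta|^k\}$ --- is not proved. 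You correctly observe that the pointwise bound on $C_k(\la)$ fails and that the needed cancellation lives in the $\mu$-average, but the proposed remedy (reduce to finite Blaschke products, extract $[z^{n+k}](p_{\geq n}/q)$ by a contour integral on $|z|=\rho$ with $\rho\uparrow\min\{1,1/|\beta|\}$, and ``pin the constant'' using $|\om|=1$ a.e.) is not an argument. For a finite Blaschke product the function $p$, hence $p_{\geq n}$, has poles \emph{on} $\ST$ (where $\om=1$), so the limiting contour integral is not absolutely convergent and no sup- or $H^1$-norm estimate on $|z|=\rho$ yields a constant independent of $p$; nothing in the proposal explains how the boundary relation $\mathrm{Re}\,p=0$ a.e.\ converts into the exact constants $2$ and $2|\beta|^k$. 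You flag this yourself as ``the delicate step,'' but that step is the entire theorem.

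For comparison, the paper closes exactly this gap by an induction in $L^2(\mu)$: writing $A_{k,n}=2\int_\ST\la^nC_k(\la)\,d\mu$ (your $C_k(\la)=\la^kQ_k(\overline\la)$), the algebraic relation $q_{k+1}+w\sum_{j=0}^kq_jp_{k+1-j}=0$ together with the Herglotz formula collapses all cross terms and gives the exact identity $\int|Q_{k+1}|^2d\mu=\int|Q_k|^2d\mu+(|w|^2-\mathrm{Re}\,w)\bigl|\sum_{j\leq k}q_jp_{k+1-j}\bigr|^2$; since $|w|^2\leq\mathrm{Re}\,w$ iff $|\beta|\leq1$, this quantity is monotone when $|\beta|\leq1$ and grows by at most the factor $|1-2w|^2$ (via one more Cauchy--Schwarz) when $|\beta|\geq1$. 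Some identity of this kind is what your outline is missing. A second, smaller omission: in the case $|\beta|<1$ you verify only that $p_1=\dots=p_k=0$ and $supp(\mu)\subseteq e^{i\vp}U_{n+k}$ suffice for equality, whereas the statement asserts an ``if and only if''; the necessity requires retracing the chain of inequalities (in the paper, the inductive step forces $\int|Q_j|^2d\mu=1$ for all $j\leq k$, whence $p_n=p_{n+1}=\dots=p_{n+k-1}=0$ successively), and no substitute for this appears in your proposal.
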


The condition for equality in Theorem \ref{T1}, in the case $|1-2w|=1$, is far from being necessary. To illustrate this consider $w=1$, $n=2k$ and a Herglotz measure supported on two arbitrary points $\la_1,\la_2$ on $\ST$ having equal point masses, $1/2$ each. Then the coefficients of the corresponding function in $\cP$ are $p_j=\la_1^j+\la_2^j$ and one easily computes
$$
|p_{2k}-p_k^2| = |\la_1^{2k}+\la_2^{2k}-(\la_1^k+\la_2^k)^2| = 2.
$$
The complete characterization of equality when $|1-2w|=1$ is given in Theorem~\ref{T3}. Note that in the special case where $w=1$, the form of the extremal functions was not explicitly stated in \cite{Li69}.

Since the set $e^{i\vt} U_k \cap e^{i\vp} U_n$ in Theorem \ref{T1} cannot be empty, as $supp(\mu) \neq \emptyset$, the number of points it contains is equal to the greatest common divisor of $k$ and $n$.

Both Theorems \ref{T1} and \ref{T2} have a version for non-normalized functions $p(z) = \sum_{n=0}^\infty p_n z^n$ with positive real part. For such a function $p$, let $p_0=x+iy, (x>0)$ and $q(z)=\big(p(z)-iy\big)/x$, which is obviously a function in $\cP$. To this $q$, having coefficients $q_n=p_n/x$, we can apply Theorems \ref{T1} and \ref{T2}. Then multiply both inequalities by $x/|p_0|$ and set $wx/p_0$ in place of $w$. What results is
$$
\left| \frac{p_n}{p_0} - w \frac{p_k p_{n-k}}{p_0^2}\right| \leq 2 \frac{{\rm Re}\,p_0}{|p_0|} \max\left\{1, \left|1-\frac{2w{\rm Re}\,p_0}{p_0}\right|\right\}
$$
and 
$$
|A_{k,n}|\leq 2\frac{{\rm Re}\,p_0}{|p_0|}\max\left\{1, \left|1-\frac{2w{\rm Re}\,p_0}{p_0}\right|^k\right\}
$$
for the modified $A_{k,n}$, having $p_j/p_0$ in place of $p_j$ (for all $j$). Note that for $w=1$ the two entries in the maximum are equal and what one gets is Livingston's original results. 

An alternative proof for the inequality in Theorem \ref{T2} under the additional condition $n\geq k+1$ can be given via the method of Delsarte and Genin \cite{DeGe}. Their approach relies on the observation that $A_{k,n}(1)$ is related to a truncation of the reciprocal of a function in $\cP$. With the aid of Herglotz' formula they get a substantially simpler proof of Livingston's result. The proof, which will be presented in section 2, is an adaptation of their arguments to our case of $A_{k,n}(w)$, for any $w\in\SC$.

Finally, we turn to a question raised by Goodman (\cite[p.104]{Go83}) about the sharp bound of $|p_{n+1}-p_n|$ for functions in $\cP$ with prescribed $p_1$. Using extreme point theory, Brown \cite{B10} proves the following theorem.

\begin{theoremO}  \label{TB}
Let $p(z)=1+\sum_{k=1}^\infty p_k z^k$ be in $\cP$, $m,n\in\SN$ and $\nu\in\SR$. Then
$$
|e^{i\nu}p_{n+m}-p_n| \leq 2 \sqrt{2- {\rm Re}\, (e^{i\nu}p_m)}.
$$
The result is sharp.
\end{theoremO}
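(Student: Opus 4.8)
The plan is to bypass the determinantal machinery entirely and argue straight from the Herglotz representation, since the quantity $e^{i\nu}p_{n+m}-p_n$ is just a linear combination of two coefficients. Writing $p_j = 2\int_\ST \la^j\,d\mu(\la)$ for the Herglotz measure $\mu$ of $p$, I would first factor
$$
e^{i\nu}p_{n+m}-p_n = 2\int_\ST \la^n\big(e^{i\nu}\la^m-1\big)\,d\mu(\la),
$$
and then apply the triangle inequality for integrals. Because $|\la|=1$ on $\ST$, the factor $|\la^n|$ disappears and one is left with
$$
|e^{i\nu}p_{n+m}-p_n| \leq 2\int_\ST \big|e^{i\nu}\la^m-1\big|\,d\mu(\la).
$$

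The second step is to pass from the $L^1$ to the $L^2$ norm of the integrand via the Cauchy--Schwarz inequality (equivalently Jensen's inequality, using that $\mu$ is a probability measure), giving
$$
\int_\ST \big|e^{i\nu}\la^m-1\big|\,d\mu \leq \left(\int_\ST \big|e^{i\nu}\la^m-1\big|^2\,d\mu\right)^{1/2}.
$$
The key computation is the elementary identity, valid for $|\la|=1$,
$$
\big|e^{i\nu}\la^m-1\big|^2 = 2 - 2\,{\rm Re}\,(e^{i\nu}\la^m),
$$
obtained by expanding the modulus and using $\bar\la^m=\la^{-m}$. Integrating this against $\mu$ and recognizing that $\int_\ST \la^m\,d\mu = p_m/2$ collapses the right-hand side to $2-{\rm Re}\,(e^{i\nu}p_m)$; combining the three displays yields the claimed bound.

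For sharpness I would test a single point mass $\mu=\delta_{\la_0}$ with $\la_0\in\ST$, corresponding to $p(z)=(1+\la_0 z)/(1-\la_0 z)$ and $p_j=2\la_0^j$. A direct substitution shows that both sides reduce to $2\,|e^{i\nu}\la_0^m-1|$, so equality holds.

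I do not anticipate a serious obstacle: the only point requiring care is the Cauchy--Schwarz step, where equality forces $|e^{i\nu}\la^m-1|$ to be $\mu$-essentially constant, which is precisely why a single point mass rather than a generic measure is the natural extremal candidate. The whole argument is short because the target is linear in the coefficients, so no determinant expansion or appeal to Theorem \ref{T1} is actually needed.
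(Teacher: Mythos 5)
Your proof is correct and follows essentially the same route as the paper: the Herglotz representation, the triangle inequality, and Cauchy--Schwarz applied to $\la^n(e^{i\nu}\la^m-1)$, the only cosmetic difference being that the paper keeps a general parameter $w$ and sets $w=e^{-i\nu}$ at the end, while you substitute it from the start. Your sharpness check via an arbitrary point mass matches the paper's example $p(z)=(1+z)/(1-z)$.
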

In this note we provide a simpler proof. 

We proceed with section 2 where the proofs of Theorems \ref{T1}, \ref{T2} and \ref{TB} are presented. In section 3 we carry out a detailed study of the equality case in a special case of Theorem \ref{T1}, namely the case $|1-2w|=1$. In section 4 we deduce a simple corollary of Theorems \ref{T1} and \ref{T2} for initial coefficients of self-maps of $\SD$ that fix the origin. 

\section{Proofs of Theorems \ref{T1}, \ref{T2} and \ref{TB}}

\begin{proof}[Proof of Theorem \ref{T1}]
First we note that $|1-2w|\leq 1$ if and only if $|w|^2 \leq {\rm Re}\, w$. We compute 
\begin{align*}
|p_n - w p_k p_{n-k}| &= \left| 2 \int_\ST \la^n d\mu(\la) - 2 w p_k \int_\ST \la^{n-k} d\mu(\la) \right| \\
& \leq 2 \int_\ST | \la^n - w p_k \la^{n-k} | d\mu(\la) \\
& \leq \; 2 \left( \int_\ST | \la^k - w p_k |^2 d\mu(\la) \right)^{1/2} \\
& = 2 \left( \int_\ST 1 - 2 {\rm Re}\,( w p_k\la^{-k}) + |w p_k|^2 d\mu(\la) \right)^{1/2} \\
& = 2 \Big( 1 - 2 {\rm Re}\, (w p_k\overline{p_k}/2) + |w p_k|^2 \Big)^{1/2} \\
& = 2 \Big(1 + (|w|^2 - {\rm Re}\, w)|p_k|^2\Big)^{1/2} \\
&\leq 2\max\{1, |1-2w|\}. 
\end{align*}
Here we used the triangle and Cauchy-Schwarz inequalities. At the last step, in the case $|1-2w|>1$, we made use of Theorem \ref{TA}. 

Now suppose that equality holds. If $|1-2w|<1$ then equality in the last of the above inequalities yields $p_k=0$. Hence the second term in $p_n - w p_k p_{n-k}$ vanishes and we have $|p_n|=2$. By Theorem \ref{TA}, $supp(\mu) \subseteq e^{i\vp} U_n$ for some $\vp \in [0,2\pi)$. 

In the case $|1-2w|>1$, the last inequality yields $|p_k|=2$. Hence $supp(\mu) \subseteq e^{i\vt} U_k$ for some $\vt \in [0,2\pi)$. Now $p_k=2e^{ik\vt}$ and 
$$
p_{n-k} = 2 \int_\ST \la^{n-k} d\mu(\la) = 2 e^{-ik\vt} \int_\ST \la^n d\mu(\la) = e^{-ik\vt}p_n.
$$
Hence $2|1-2w|=|p_n - 2 w p_n|$, which implies that $|p_n|=2$. Again by Theorem \ref{TA} we have $supp(\mu) \subseteq e^{i\vp} U_n$ for some $\vp \in [0,2\pi)$ and thus $supp(\mu)$ must form a subset of the intersection $e^{i\vt} U_k \cap e^{i\vp} U_n$.

It is elementary to check that in all three cases the conditions are sufficient for equality.
\end{proof}
\vspace{0,2cm}

\begin{proof}[Proof of Theorem \ref{T2}]
Let $n\geq1$ and $w\in\SC$ be fixed. The case $k=0$ follows from Theorem \ref{TA}. For $k\geq1$ we define
\begin{equation*}
Q_{k,n}(\la)= \left|
\begin{array}{ccccc}
\la^{n+k-1}& p_{n+k-1} & p_{n+k-2} & \ldots & p_n \\
w               & 1               & 0               & \ldots & 0     \\
w\la           & wp_1         & 1               & \ldots & 0     \\

\vdots        & \vdots        &   \vdots       &\ddots & \vdots\\
w\la^{k-1} & wp_{k-1}    & wp_{k-2}    & \ldots & 1
\end{array}  \right|.
\end{equation*}
Expanding $A_{k,n}$ along the first column, using the Herglotz formula and the linearity of the integral, and finally putting the determinant back together, we get $A_{k,n}=2\int_{\ST}\la Q_{k,n}(\la) d\mu(\la)$. 

We will now show by induction that
\be \label{ind}
\int_{\ST}|Q_{k,n}(\la)|^2 d\mu(\la) \leq \max \{ 1, |1-2w|^{2k} \}
\ee
for all $k\geq1$. Then the desired inequality will follow since
\begin{align}\label{ineq}
|A_{k,n}| &\leq 2\int_{\ST}|Q_{k,n}(\la)| d\mu(\la) \nonumber \\ 
&\leq 2 \left(\int_{\ST}|Q_{k,n}(\la)|^2 d\mu(\la) \right)^{1/2} \nonumber \\
&\leq 2\max\{1, |1-2w|^k\},
\end{align}
by the triangle and Cauchy-Schwarz inequalities. 

We first prove \eqref{ind} for $k=1$. (Recall that $|1-2w|<1$ if\mbox f $|w|^2<{\rm Re}\, w$.)
\begin{align}\label{j=1}
\int_{\ST}|Q_{1,n}(\la)|^2 d\mu(\la) =& \int_{\ST} 1 +|wp_n|^2 -2{\rm Re}\, (w p_n \la^{-n}) d\mu(\la) \nonumber \\
=   & 1+ (|w|^2-{\rm Re}\, w)|p_n|^2 \\
\leq &  \max \{ 1, |1-2w|^2 \}. \nonumber
\end{align}
Next, let us assume that \eqref{ind} holds for $k$ and let us prove it for $k+1$ instead of $k$. Expanding $Q_{j,n}(\la)$ along the second row it is not difficult to see that
\begin{align*}
Q_{j,n}(\la)=& \left|
\begin{array}{cccc}
\la^{n+j-1}& p_{n+j-2}  & \ldots & p_n \\
w\la            & 1               & \ldots & 0     \\
w\la^2        & wp_1         & \ldots & 0     \\
\vdots         & \vdots          &\ddots & \vdots\\
w\la^{j-1}  & wp_{j-2}    & \ldots & 1
\end{array}  \right| -w\left|
\begin{array}{cccc}
p_{n+j-1}  & p_{n+j-2} & \ldots  & p_n \\
wp_1              & 1               & \ldots  & 0     \\
wp_2          & wp_1         & \ldots  & 0     \\
\vdots             & \vdots        &\ddots  & \vdots\\
wp_{j-1}    & wp_{j-2}    & \ldots  & 1
\end{array}  \right|\\
=&\la Q_{j-1,n}(\la) -w A_{j-1,n}.
\end{align*}
For $j=k+1$ we have $Q_{k+1,n}(\la)=\la Q_{k,n}(\la) -w A_{k,n}$. Hence
\begin{align}
\int_{\ST}&|Q_{k+1,n}(\la)|^2 d\mu(\la) =\nonumber\\
&= \int_{\ST} \left[ |Q_{k,n}(\la)|^2  -2{\rm Re}\,\left(w \overline{\la Q_{k,n}(\la)} A_{k,n} \right)\right] d\mu(\la)  +|w|^2|A_{k,n}|^2 \nonumber \\
&= \int_{\ST}  |Q_{k,n}(\la)|^2 d\mu(\la) +(|w|^2-{\rm Re}\,w) |A_{k,n}|^2. \label{Q}
\end{align}
We distinguish two cases. If $|1-2w|<1$ then \eqref{Q} and \eqref{ind} show that 
\be
\int_{\ST}|Q_{k+1,n}(\la)|^2 d\mu(\la) \leq \int_{\ST}|Q_{k,n}(\la)|^2 d\mu(\la)  \leq 1. \label{ind-<}
\ee
For the case $|1-2w|\geq 1$, we make a further use of the Cauchy-Schwarz inequality to obtain $|A_{k,n}|^2 \leq 4\int_{\ST}|Q_{k,n}(\la)|^2 d\mu(\la)$. Now by \eqref{Q} and \eqref{ind} we get that 
\begin{align*}
\int_{\ST}|Q_{k+1,n}(\la)|^2 d\mu(\la) &\leq (1 +4|w|^2-4{\rm Re}\,w) \int_{\ST}|Q_{k,n}(\la)|^2 d\mu(\la) \\
&\leq |1-2w|^2 |1-2w|^{2k} \\
&=|1-2w|^{2k+2}.
\end{align*}
Hence \eqref{ind} has been proved for all $k\geq1$.

We now turn to the case of equality. Suppose that $|1-2w|<1$ and $|A_{k,n}|=2$. Then inequalities \eqref{ineq} become equalities and in particular $\int_{\ST}  |Q_{k,n}(\la)|^2 d\mu(\la)=1$. The inductive step \eqref{ind-<} shows that $\int_{\ST}  |Q_{j,n}(\la)|^2 d\mu(\la)=1$ for all $j=1,2,\ldots,k$. This is true in particular for $j=1$, which by \eqref{j=1} implies that $p_n=0$. This in turn is easily seen to imply that $A_{k,n}=A_{k-1,n+1}$. Hence we may repeat the above argument to get that $\int_{\ST}  |Q_{j,n+1}(\la)|^2 d\mu(\la)=1$ for all $j=1,2,\ldots,k-1$. Again from $j=1$ we get by \eqref{j=1} that $p_{n+1}=0$. We repeat this argument until we get $p_n=p_{n+1}=\ldots=p_{n+k-1}=0$. Now $A_{k,n}=A_{0,n+k}=p_{n+k}$ is a number of modulus 2 and therefore Theorem \ref{TA} yields $supp(\mu) \subseteq e^{i\vp} U_{n+k},$ for some $\vp \in [0,2\pi)$. Finally, for all $j=1,2,\ldots,k$ we have
$$
p_j = 2\int_{\ST}  \la^j d\mu(\la) = 2 e^{i(n+k)\vp} \int_{\ST}  \la^{j-n-k} d\mu(\la) =  e^{i(n+k)\vp} \overline{p_{n+k-j}} = 0. 
$$

In both cases the sufficiency for equality is easy to verify.
\end{proof}
\vspace{0,2cm}

\begin{proof}[Alternative proof of Theorem \ref{T2} (case $n\geq k+1$)] Let $w\in\SC$ be fixed. The case $k=0$ follows from Theorem \ref{TA}. Let $k\geq1$ and consider the perturbation
$$
p^*(z) = 1 +w(p_1z+\ldots+p_kz^k)+p_{k+1}z^{k+1}+\ldots
$$
Let $Q_k(z) = 1+q_1z+\ldots +q_kz^k$ be the $k^{th}$ partial sum of $(p^*)^{-1}$, the reciprocal of $p^*$. We define $v_k(z) = \sum_{m=0}^\infty v_{k,m}z^m$, analytic at the origin, via the identity
$$ 
Q_k(z) p^*(z) = 1 + 2 z^{k+1} v_k(z).
$$
Computing the coefficient of $z^{k+m+1}$, for $m\geq k$, we get that
\be\label{v1}
2 v_{k,m} = \sum_{j=0}^k q_j \, p_{k+m+1-j}.
\ee
Note that for $k_1\neq k_2$ the coefficients $q_j$ coincide for $1\leq j \leq\min\{k_1, k_2\}$, hence formula \eqref{v1} readily implies that 
\be\label{v2}
2v_{k,m} = q_{k}p_{m+1} +2 v_{k-1,m+1}.
\ee 
We now proceed with induction on $k\geq1$ to prove that 
\be \label{v=A}
2v_{k,m} = A_{k,m+1}(w), \quad \text{for all} \quad m\geq k.
\ee

For $k=1$ it is easy to verify that $2v_{1,m} = p_{m+2}-wp_1p_{m+1}  = A_{1,m+1}$ for all $m\geq 1$.

Next we suppose that \eqref{v=A} holds for some $k$. We shall prove it for $k+1$ instead of $k$. Expanding with respect to the last column we see that 
\begin{align*}
A_{k+1,m+1} &= A_{k,m+2} + p_{m+1} (-1)^{k+1} \left|
\begin{array}{cccccc}
wp_1        & 1                 &  \ldots  & 0       \\
wp_2        & wp_1           &  \ldots  & 0       \\
\vdots       & \vdots          & \ddots   & \vdots\\
wp_k        & wp_{k-1}      &  \ldots  &  1      \\
wp_{k+1} & wp_k            &  \ldots  &   wp_1
\end{array} \right|\\
 &= A_{k,m+2} + p_{m+1} q_{k+1},
\end{align*}
where we made use of Wronski's formula \cite[p.17]{He74} for the coefficients of the reciprocal of a power series. Therefore by \eqref{v2} we get that
$$
A_{k+1,m+1} = 2v_{k,m+1} + p_{m+1} q_{k+1} = 2v_{k+1,m}
$$
for $m\geq k+1$. Thus \eqref{v=A} has been proved. We set $m=n-1$ and write $A_{k,n}(w) = 2v_{k,n-1}$, for $n\geq k+1$.

We proceed as in \cite{DeGe} using the Herglotz formula in \eqref{v1} and the Cauchy-Schwarz inequality to get
$$
|v_{k,n-1}|^2 = \left| \int_{\ST}\la^{k+n} Q_k(\,\overline{\la}\,) d\mu(\la) \right|^2 \leq \int_{\ST}|Q_k(\,\overline{\la}\,)|^2 d\mu(\la).
$$
Now, we show that 
\be \label{ind2}
\int_{\ST}|Q_k(\,\overline{\la}\,)|^2 d\mu(\la) \leq \max\{1, |1-2w|^{2k}\}
\ee
by induction on $k\geq 1$.

For $k=1$ we compute 
$$
\int_{\ST}|Q_1(\,\overline{\la}\,)|^2 d\mu(\la) = 1+|p_1|^2(|w|^2-{\rm Re}\, w) \leq \max\{1, |1-2w|^2\}.
$$

Now we suppose that \eqref{ind2} is true for $k$. We shall prove it for $k+1$ instead of $k$. We compute
\begin{align*}
\int_{\ST}|Q_{k+1}(\,\overline{\la}\,)|^2 d\mu(\la) &=  \int_{\ST} \sum_{j,m=0}^{k+1}q_j \overline{q_m} \la^{m-j} d\mu(\la) \\
& = \sum_{j=0}^{k+1} |q_j|^2 + {\rm Re}\, \left( \sum_{j<m} q_j \overline{q_m} p_{m-j}  \right),
\end{align*}
where $j=0,1,\ldots,k$ and $m=1,2,\ldots,k+1$ at the last summation. Therefore
\begin{align*}
\int_{\ST}|Q_{k+1}(\,\overline{\la}\,)|^2 d\mu(\la) &= \int_{\ST}|Q_k(\,\overline{\la}\,)|^2 d\mu(\la) + |q_{k+1}|^2  + {\rm Re}\, \left( \overline{q_{k+1}} \sum_{j=0}^k q_j  p_{k+1-j}  \right)\\
 & = \int_{\ST}|Q_k(\,\overline{\la}\,)|^2 d\mu(\la) + (|w|^2-{\rm Re}\, w) \left| \sum_{j=0}^k q_j  p_{k+1-j} \right|^2,
\end{align*}
since $q_{k+1} + w \sum_{j=0}^k q_j  p_{k+1-j} =0$ by the definition of $Q_{k+1}$. If $|1-2w|<1$ then  
$$
\int_{\ST}|Q_{k+1}(\,\overline{\la}\,)|^2 d\mu(\la) \leq \int_{\ST}|Q_k(\,\overline{\la}\,)|^2 d\mu(\la)  \leq 1
$$
and we are done. If  $|1-2w|\geq1$ then we make a further use of the Herglotz formula to get
$$
\left| \sum_{j=0}^k q_j  p_{k+1-j} \right|^2 =\left| 2 \int_{\ST} \la^{k+1} Q_k(\,\overline{\la}\,) d\mu(\la)  \right|^2 \leq 4 \int_{\ST}|Q_k(\,\overline{\la}\,)|^2 d\mu(\la).
$$
Hence
$$
\int_{\ST}|Q_{k+1}(\,\overline{\la}\,)|^2 d\mu(\la) \leq  (1+4|w|^2 -4{\rm Re}\, w) \int_{\ST}|Q_k(\,\overline{\la}\,)|^2 d\mu(\la)  \leq |1-2w|^{2k+2}
$$
and \eqref{ind2} has been established.

It is not clear how one can make the above argument work when $n\leq k$. 
\end{proof}
\vspace{0,05cm}

\begin{proof}[Proof of Theorem \ref{TB}.]
The proof relies on a further generalization of Theorem \ref{T1}. Let $w\in\SC$ and compute
\begin{align*}
|p_{n+m}-wp_n|& \leq 2 \int_\ST |\la^m -w | d\mu(\la) \\
& \leq \; 2 \left( \int_\ST |\la^m -w |^2 d\mu(\la) \right)^{1/2} \\
& = 2 \Big( 1+|w|^2 - {\rm Re}\, (\overline{w} p_m) \Big)^{1/2}. 
\end{align*}
Choosing $w=e^{-i\nu}$ we obtain the desired inequality. Equality evidently holds for the half-plane function $\frac{1+z}{1-z}$.
\end{proof}
\vspace{0,05cm}

\section{Case of equality for Theorem \ref{T1}}

We now consider the case of equality for Theorem \ref{T1} when $|1-2w|=1$. Since our result is more general than Livingston's, it is not surprising that the conditions for equality and their proofs are lengthy.

\begin{theorem} \label{T3}
Let $p\in\cP$, $\mu$ be its representing Herglotz measure, $1\leq k \leq n-1$ and $w=( 1 +e^{i\vt} )/2$ with $|\vt|<\pi$. Then $p_n - w p_k p_{n-k} = 2e^{ic}$ for some $c$ in $[0,2\pi)$ if and only if either
\smallskip

{\rm (i)} $\;p_k=0$ and $supp(\mu) \subseteq e^{i c/n} U_n$; $\,$ or
 \smallskip

{\rm (ii)} $p_k\neq0$ and
\be \label{stat}
supp(\mu) \subseteq (e^{i\frac{\psi}{n-2k}} U_{n-2k}\cap e^{i\left(\frac{\vp}{k}+\frac{c-\psi}{2k} \right)} U_k) \cup (e^{i\frac{\psi}{n-2k}} U_{n-2k}\cap e^{i\left(\frac{\pi-\vp}{k} +\frac{c-\psi}{2k}\right)} U_k)
\ee
for some $\psi$ in $[0,2\pi)$ and $|\vp|\leq \pi/2$. Except for the degenerate case where the support of $\mu$ consists of only one point, the total mass of the measure in each of the two sets of the union is (respectively) equal to 
$$
\frac{1}{2} \left(1 + \frac{\sin\vt}{1+\cos\vt} \tan\vp \right) \quad \text{and} \; \quad \frac{1}{2} \left(1 -  \frac{\sin\vt}{1+\cos\vt} \tan\vp \right).
$$
\end{theorem}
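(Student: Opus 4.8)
The plan is to push the equality analysis from the proof of Theorem~\ref{T1} one step further, exploiting that when $|1-2w|=1$ we have $|w|^2={\rm Re}\,w$, so the quantity $1+(|w|^2-{\rm Re}\,w)|p_k|^2$ equals $1$ identically and the final inequality in that proof carries no information. Consequently $p_n-wp_kp_{n-k}=2e^{ic}$ forces equality in \emph{both} the triangle and the Cauchy--Schwarz inequalities used there. Writing $f(\la)=\la^{n-k}(\la^k-wp_k)$ and noting $\int_\ST f\,d\mu=e^{ic}$ with $\int_\ST|f|^2\,d\mu=1$, equality in Cauchy--Schwarz gives $|\la^k-wp_k|=|f(\la)|$ constant $\mu$-a.e., necessarily $=1$, while equality in the triangle inequality then pins the phase, so that for $\mu$-a.e.\ $\la\in supp(\mu)$
$$
\la^{n-k}(\la^k-wp_k)=e^{ic},\qquad |\la^k-wp_k|=1.
$$
These two pointwise identities are the whole content to be unwound; sufficiency will be checked at the end. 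If $p_k=0$ the first identity reads $\la^n=e^{ic}$, i.e.\ $supp(\mu)\subseteq e^{ic/n}U_n$, which is alternative (i).

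Assume henceforth $p_k\neq0$ and set $b=wp_k=|b|e^{i\beta}$. The condition $|\la^k-b|=1$ says that $\la^k$ lies on the intersection of $\ST$ with the circle of radius $1$ about $b$; this intersection is the pair of points $\zeta_{1,2}=e^{i(\beta\pm\gamma)}$, where $\cos\gamma=|b|/2$ and $\gamma\in(0,\pi/2]$. Hence $\la$ ranges over the two cosets $e^{i(\beta\pm\gamma)/k}U_k$. Feeding $\zeta_j-b=e^{i\beta}e^{\pm i(\pi-\gamma)}$ into the first identity shows that on the branch $\la^k=\zeta_j$ the quantity $\la^{n-k}=e^{i(c-\arg(\zeta_j-b))}$ is constant, whence $\la^{n-2k}=e^{i(c-\arg(\zeta_j-b)-\arg\zeta_j)}$, which---and this is the crucial point---takes the \emph{same} value $e^{i\psi}$ on both branches, because $\arg(\zeta_j-b)+\arg\zeta_j\equiv 2\beta+\pi$ for $j=1,2$. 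Therefore $supp(\mu)$ lies in the union of $e^{i\psi/(n-2k)}U_{n-2k}\cap e^{i\arg\zeta_1/k}U_k$ and $e^{i\psi/(n-2k)}U_{n-2k}\cap e^{i\arg\zeta_2/k}U_k$ (the convention $U_0=\ST$ absorbing the case $n=2k$). Setting $\vp=\gamma-\pi/2$, so that $|\vp|\le\pi/2$, and using $c-\psi\equiv2\beta+\pi$ to rewrite the $U_k$-coset arguments as $\frac{\vp}{k}+\frac{c-\psi}{2k}$ and $\frac{\pi-\vp}{k}+\frac{c-\psi}{2k}$ recasts this containment exactly as \eqref{stat}.

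It remains to determine the two masses. Let $m_1,m_2$ with $m_1+m_2=1$ be the $\mu$-masses of the branches $\zeta_1,\zeta_2$; then the self-consistency requirement $b=wp_k=2w\int_\ST\la^k\,d\mu=2w(m_1\zeta_1+m_2\zeta_2)$ must hold. Substituting $\zeta_{1,2}=e^{i\beta}e^{\pm i\gamma}$, $|b|=2\cos\gamma$, and $w=\cos(\vt/2)e^{i\vt/2}$, and using $\frac{1-w}{w}=-i\tan(\vt/2)$, the equation collapses to $m_1-m_2=-\cot\gamma\,\tan(\vt/2)=\tan\vp\,\tan(\vt/2)=\frac{\sin\vt}{1+\cos\vt}\tan\vp$, which is precisely the asserted split (the bound $|m_1-m_2|\le1$ being automatic from $\cos\gamma\le\cos(\vt/2)$, i.e.\ $\gamma\ge\vt/2$, which in turn follows from $|p_k|\le2$). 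Finally, for sufficiency one observes that $\la^k$, $\la^{n-k}=e^{i\psi}\la^k$ and $\la^n=e^{i\psi}\la^{2k}$ are each constant on each branch, so $p_k,p_{n-k},p_n$ depend only on $m_1,m_2$ and the branch values $\zeta_1,\zeta_2$ (not on how mass is spread among the $\gcd(k,n-2k)$ points inside a coset); a direct substitution using the chosen masses then verifies $p_n-wp_kp_{n-k}=2e^{ic}$.

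I expect the main obstacle to be the middle step: converting the two algebraic identities into the precise form \eqref{stat}, in particular proving that both branches share the \emph{same} coset of $U_{n-2k}$ and tracking the angles $\beta,\gamma,\psi,\vp,c$ through the reparametrization $\vp=\gamma-\pi/2$ without sign errors in the mass formula. The remaining delicacy is the degenerate case in which $supp(\mu)$ reduces to a single point (equivalently $|b|=2$, $\gamma=0$, or all mass on one branch): there the two-branch structure collapses, equality holds automatically as already recorded in Theorem~\ref{T1}, and the masses are no longer determined, which is exactly why that case must be excluded from the mass statement.
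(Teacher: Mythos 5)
Your proof is correct in substance and begins exactly where the paper's does: equality in the triangle and Cauchy--Schwarz steps of Theorem \ref{T1} forces $\la^{n-k}(\la^k-wp_k)=e^{ic}$ and $|\la^k-wp_k|=1$ on $supp(\mu)$. From there the two arguments genuinely diverge in execution. The paper first normalizes ($c=0$ by a rotation, $2k\le n$ by the symmetry $k\leftrightarrow n-k$), settles $n=2k$ by noting that ${\rm Im}\,\la^k$ is constant and ${\rm Re}\,(wp_k)=0$ (which yields the masses), and then reduces $n>2k$ to $n=2k$ by deriving the companion identity $\la^n-wp_{n-k}\la^k=e^{ic}$ and combining the two to get $\la^{n-2k}$ constant. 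You instead keep $c$ general, solve $|\la^k-b|=1$, $|\la^k|=1$ explicitly for the two branch values $\zeta_{1,2}=e^{i(\beta\pm\gamma)}$, prove directly that $\la^{n-2k}$ takes the same value on both branches via $\arg(\zeta_j-b)+\arg\zeta_j\equiv 2\beta+\pi$, and read off the masses from $wp_k=2w(m_1\zeta_1+m_2\zeta_2)$. I checked the angle bookkeeping ($\vp=\gamma-\pi/2$, $\psi=c-2\beta-\pi$, $c-\psi\equiv2\beta+\pi$) and the mass identity $m_1-m_2=-\cot\gamma\,\tan(\vt/2)=\frac{\sin\vt}{1+\cos\vt}\tan\vp$; both are right. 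Your route avoids the chain of rotations and is more self-contained; the paper's is shorter because the $k\leftrightarrow n-k$ symmetry does the work of your explicit branch computation.

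Two smaller points. First, for $n<2k$ the symbol $U_{n-2k}$ in \eqref{stat} is not defined, so you still need the paper's preliminary reduction to $2k\le n$ (the functional is unchanged under $k\mapsto n-k$) before your argument literally applies. Second, your closing parenthetical is inaccurate: a one-point support is \emph{not} equivalent to $\gamma=0$ (take $\vt=\pi/2$ and $\mu$ a point mass at $1$: then $\gamma=\pi/4$, the branches are $\{i,1\}$, and your mass formula still applies, giving $m_1=0$, $m_2=1$); conversely $\gamma=0$ forces $w=1$ and $|p_k|=2$, in which case $supp(\mu)$ may contain up to $\gcd(k,n-2k)$ points while the two sets of \eqref{stat} coincide and each carries full mass. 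The genuine degeneracy is the coincidence of the two branches, which the paper isolates as the case $|\vp|=\pi/2$ and resolves by allowing the total mass to be split arbitrarily between the two identical sets; your write-up should single out that case in the same way rather than identifying it with a one-point support.
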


\begin{proof}
We observe that without loss of generality we may assume that \mbox{$2k\leq n$}, since otherwise, we may set $m=n-k$ and see that the functional $p_n - w p_k p_{n-k}$ remains unchanged while the new pair of integers $(m,n)$ satisfies $2m<n$. Therefore the second condition makes sense.

We will prove the necessity of the two conditions, since the sufficiency is elementary, although laborious in the case (ii). 

We assume that $c=0$. Having proved the assertion in this case we apply it to the rotated function $p(e^{-i c/n}z)$ in order to obtain the general result. 

Retracing the equalities in the proof of Theorem \ref{T1} we see that
\be
\la^n - w p_k \la^{n-k} = 1, \quad \quad \la \in supp(\mu), \label{supp1}
\ee
since equality in the triangle inequality yields constant argument and equality in the Cauchy-Schwarz inequality yields constant modulus. Formula \eqref{supp1} is equivalent to $\la^k - wp_k = \la^{k-n}$, which we integrate with respect to $\mu$ in order to get
\be
p_{n-k}= (1-2\overline{w}) \overline{p_k} = -e^{-i\vt}\overline{p_k}. \label{coe}
\ee
It is now evident that if one of the coefficients $p_k, p_{n-k}$ is zero, then both of them are zero. If $p_k=0$, case (i) clearly follows from Theorem \ref{TA}, but it can also be seen from \eqref{supp1} which becomes $\la^n=1$.

Suppose that $p_k\neq0$. In order to prove condition (ii) we begin with the additional assumption that $n=2k$. Equation \eqref{supp1} is then equivalent to \mbox{$\la^k -  \la^{-k} = wp_k$.} From this we deduce that ${\rm Im} \, \la^k$ is constant on $supp(\mu)$ and that ${\rm Re} \, (w p_k) = 0$. The former implies that for some $\zeta=e^{i\vp}$ (we may assume that $|\vp|\leq \pi /2$), the support of $\mu$ consists of the $k$-th roots of $\zeta$ and$\,-\overline{\zeta}$, having point masses, say, $m_j$ and $m_j^*$, respectively, $1\leq j \leq k$. In other words 
\be \label{s}
supp(\mu) \subseteq  e^{i \frac{\vp}{k}} U_k \cup  e^{i \frac{\pi-\vp}{k} } U_k,
\ee
with total mass in each of the two sets of the union $M=\sum_{j=1}^k m_j$ and $M^*=\sum_{j=1}^k m_j^*$, respectively. The fact that $\mu$ is a probability measure means that $M+M^*=1$. Next, we easily see that $p_k=2\int_\ST \la^k d\mu(\la)=2(\zeta M - \overline{\zeta}M^*)=2 \left( (\zeta+\overline{\zeta}) M - \overline{\zeta}  \right)$. Hence 
\begin{align} \label{mass}
0 &={\rm Re} \, (w \, p_k ) = {\rm Re} \,\left[ (1+e^{i\vt}) \left( (\zeta+\overline{\zeta}) M - \overline{\zeta}  \right)\right] \nonumber \\
& = (1+\cos\vt) \cos\vp \, (2M-1) -\sin\vp \sin\vt.
\end{align}
If $|\vp|=\pi/2$, i.e. $\!\!\!$ if $\zeta$ is either $i$ or $-i$, then $\zeta$ and $\,-\overline{\zeta}$ coincide and therefore we may choose to divide the total mass of $\mu$ into two parts in any possible way, and in particular as asserted in (ii). Otherwise, if $|\vp|<\pi/2$, equation \eqref{mass} implies 
$$
M = \frac{1}{2} \left(1 + \frac{\sin\vt}{1+\cos\vt} \tan\vp \right) .
$$
Hence, to see that \eqref{stat} has been proved, recall that we regard $U_0$ as $\ST$ and therefore, since $n=2k$, we may choose $\psi$ freely. The choice $\psi=0$ completes the proof of \eqref{stat} in case $n=2k$. 

For the remaining case $n>2k$ in the case (ii), we repeat the arguments used to prove \eqref{supp1} to get
\be
\la^n - w p_{n-k}\la^k = 1, \quad \quad \la \in supp(\mu). \label{supp2}
\ee
A combination of \eqref{supp1} and \eqref{supp2} shows that $p_k \la^{n-k}=p_{n-k}\la^k$. Hence, by \eqref{coe},
$$
\la^{n-2k} = -e^{-i\vt} \overline{p_k}/p_k, \quad \quad \la \in supp(\mu).
$$
This yields 
\be
supp(\mu) \subseteq e^{i\frac{t}{n-2k}} U_{n-2k}, \label{n-2k}
\ee
for some $t\in [0,2\pi)$. Hence $p_n = e^{it} p_{2k}, \; p_{n-k} = e^{it} p_k$ and \mbox{$2=p_n - w p_k p_{n-k}$}  $=e^{it} (p_{2k} - w p_k^2)$. It follows that the function $p(e^{it/2k}z)$ must satisfy condition \eqref{s} and, therefore, $p(z)$ satisfies the corresponding rotation of \eqref{s}. Together with \eqref{n-2k} this is
$$
supp(\mu) \subseteq (e^{i\frac{t}{n-2k}} U_{n-2k}\cap e^{i\left(\frac{\vp}{k} -\frac{t}{2k} \right)} U_k) \cup (e^{i\frac{t}{n-2k}} U_{n-2k}\cap e^{i\left(\frac{\pi-\vp}{k}-\frac{t}{2k}\right)} U_k),
$$
which is \eqref{stat} in case $c=0$. If $c\neq0$ then a further rotation by $e^{i c/n}$ and the substitution $\psi=t+c(1-2k/n)$ yield \eqref{stat}.
\end{proof}

We wish to remark that in the special case where $w=1$, Theorem \ref{T3} has the following simpler form (note that this was not explicitly stated in \cite{Li69}): It holds that $|p_n - p_k p_{n-k}|=2$ if and only if either 
\smallskip

{\rm (i)} $\;p_k=0$ and $supp(\mu) \subseteq e^{i\vp} U_n$ for some $\vp$ in $[0,2\pi)$; $\,$ or
\smallskip

{\rm (ii)} $p_k\neq0$ and
$$
supp(\mu) \subseteq (e^{i\vp} U_{n-2k}\cap e^{i\vt_1} U_k) \cup (e^{i\vp} U_{n-2k}\cap e^{i\vt_2} U_k)
$$
for some $\vp, \vt_1$ and $\vt_2$ in $[0,2\pi)$. Except for the degenerate case where the support of $\mu$ consists of only one point, the total mass of the measure in each of the two sets of the union is equal to $1/2$.

\section{Application to the self-maps of $\SD$}
There is a close connection between the class $\cP$ and self-maps of $\SD$ via conformal maps of $\SD$ to the right half plane, namely, $p=\frac{1+\vp}{1-\vp}$ is in $\cP$ for a function $\vp$ analytic in $\SD$ if and only if $\vp:\SD\to\SD$ and $\vp(0)=0$. Writing $\vp(z)=\sum_{n=1}^\infty a_n z^n$ we may relate the first few coefficients of the two functions by 
\begin{align*}
p_1 =&  2 a_1, \qquad p_2 = 2(a_2 +a_1^2), \qquad p_3 = 2(a_3 +2a_1a_2+a_1^3), \\
p_4 =& 2 (a_4 +2a_1a_3 +a_2^2 +3a_1^2a_2 +a_1^4).
\end{align*}

For functions $\vp$ of this form, the Schwarz lemma states that $|a_1|\leq 1$ while the Schwarz-Pick lemma says that $|a_2|\leq 1 -|a_1|^2$. One then easily computes 
$$
|a_2+\la a_1^2| \,\leq\, |a_2|+|\la| |a_1|^2 \,\leq\, 1 +(|\la|-1)|a_1|^2 \,\leq\, \max\{1, |\la| \}. 
$$
(See \cite{KM} for this calculation and an application of it.) The same inequality can be obtained from our Theorem \ref{T1} with $\la=1-2w$ and $n=k+1=2$. 

For higher order coefficients one has F.W. Wiener's generalization of the Schwarz-Pick lemma $|a_n|\leq 1 -|a_1|^2$ (see \cite{Boh} or problem 9 in p.172 of \cite{Ne}). However, even if we use this inequality, it does not seem easy to get the following corollary in a different way, without applying our Theorems \ref{T1} and \ref{T2}.

\begin{corollary*}
If $\vp:\SD\to\SD$ is holomorphic, $\vp(0)=0$ and $\la\in\SC$ then
\begin{align}
|a_3+(1+\la)a_1a_2+\la a_1^3| \leq & \max\{1, |\la| \} \label{self1}\\
|a_3+2\la a_1a_2 +\la^2 a_1^3| \leq & \max\{1, |\la|^2 \} \label{self2}
\end{align}
and
\begin{align}
|a_4 +(1+\la)a_1a_3 +a_2^2 +(1+2\la)a_1^2a_2 +\la a_1^4| \leq & \max\{1, |\la|\} \label{self3} \\
|a_4 +2a_1a_3 +\la a_2^2 +(1+2\la)a_1^2a_2 +\la a_1^4| \leq & \max\{1, |\la|\} \label{self4} \\
|a_4 +(1+\la)a_1a_3 +\la a_2^2 +\la(2+\la)a_1^2a_2 +\la^2 a_1^4| \leq & \max\{1, |\la|^2\} \label{self5} \\
|a_4 +2\la a_1a_3 +\la a_2^2 +3\la^2a_1^2a_2 +\la^3 a_1^4| \leq & \max\{1, |\la|^3\} \label{self6}.
\end{align}
\end{corollary*}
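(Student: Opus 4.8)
The plan is to read each of the six left-hand sides as one half of a specific instance of the functionals $p_n - w p_k p_{n-k}$ and $A_{k,n}(w)$ that are bounded in Theorems \ref{T1} and \ref{T2}, after the reparametrization $\la = 1 - 2w$, equivalently $w = (1-\la)/2$. Under this substitution the bounds $2\max\{1,|1-2w|^k\}$ become exactly $2\max\{1,|\la|^k\}$, so after dividing by $2$ the right-hand sides of \eqref{self1}--\eqref{self6} appear verbatim. The whole corollary thus reduces to six polynomial identities in the $a_j$, obtained by substituting the displayed relations $p_1 = 2a_1$, $p_2 = 2(a_2+a_1^2)$, $p_3 = 2(a_3+2a_1a_2+a_1^3)$ and $p_4 = 2(a_4+2a_1a_3+a_2^2+3a_1^2a_2+a_1^4)$ into the appropriate determinant or product, and checking that the result is $2$ times the bracket in the corollary. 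The starting point is simply that $p = \frac{1+\vp}{1-\vp}\in\cP$, so Theorems \ref{T1} and \ref{T2} apply to its coefficients.

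Concretely, I would match the inequalities as follows. For the two cubic estimates, \eqref{self1} comes from $A_{1,2} = p_3 - w p_1 p_2$ (equivalently the Theorem \ref{T1} functional with $n=3$, $k=1$) and \eqref{self2} from $A_{2,1} = p_3 - 2w p_1 p_2 + w^2 p_1^3$. For the four quartic estimates, \eqref{self3} comes from $A_{1,3} = p_4 - w p_1 p_3$, \eqref{self4} from the Theorem \ref{T1} functional $p_4 - w p_2^2$ with $k=2$, $n=4$, \eqref{self5} from $A_{2,2} = p_4 - w p_1 p_3 - w p_2^2 + w^2 p_1^2 p_2$, and \eqref{self6} from $A_{3,1} = p_4 - 2w p_1 p_3 - w p_2^2 + 3w^2 p_1^2 p_2 - w^3 p_1^4$. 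Before substituting I would first expand the two larger determinants; for $A_{3,1}$ the cleanest route is to expand along the last column, which produces $A_{2,2}$ together with the $3\times3$ minor $w p_3 - 2w^2 p_1 p_2 + w^3 p_1^3$ (a Wronski-type reciprocal coefficient), giving the clean expression recorded above.

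The substitution is then pure bookkeeping, but it is worth noting the binomial patterns that keep it clean. After collecting terms one finds, for instance, that the coefficient of $a_1 a_3$ in \eqref{self3} is $2-2w = 1+\la$, the coefficient of $a_1^2 a_2$ in \eqref{self5} is $3 - 8w + 4w^2 = \la(2+\la)$, and in \eqref{self6} the coefficients of $a_1^2 a_2$ and $a_1^4$ collapse to $3(1-2w)^2 = 3\la^2$ and $(1-2w)^3 = \la^3$ respectively. In every case the total comes out to exactly $2$ times the bracket in the corollary, and Theorem \ref{T1} or \ref{T2} with $k$ equal to $1$, $2$ or $3$ supplies the matching bound $2\max\{1,|\la|^k\}$.

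The only genuine obstacle is computational: \eqref{self6} requires a correct evaluation of the $4\times4$ determinant $A_{3,1}$ and a careful collection of the degree-four monomials in the $a_j$, where sign and binomial-coefficient slips are easy to make. Nothing deeper is involved, since once the six identities are established the inequalities follow immediately from Theorems \ref{T1} and \ref{T2}.
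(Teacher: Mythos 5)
Your proposal is correct and follows exactly the paper's route: set $\la=1-2w$, apply Theorem \ref{T1} with $(k,n)=(1,3),(1,4),(2,4)$ for \eqref{self1}, \eqref{self3}, \eqref{self4} and Theorem \ref{T2} with $(k,n)=(2,1),(2,2),(3,1)$ for \eqref{self2}, \eqref{self5}, \eqref{self6}, then substitute the displayed relations between the $p_j$ and $a_j$. Your expansion of $A_{3,1}$ and the coefficient identifications (e.g.\ $3-8w+4w^2=\la(2+\la)$, $(1-2w)^3=\la^3$) all check out; the paper merely records the parameter choices and omits this bookkeeping.
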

\begin{proof}
Set $\la=1-2w$ and apply Theorem \ref{T1} with $n=k+2=3$ to get \eqref{self1}, with $n=k+3=4$ to get \eqref{self3} and with  $n=k+2=4$ to get \eqref{self4}. Apply Theorem \ref{T2} with $k=n+1=2$ to get \eqref{self2}, with $k=n=2$ to get \eqref{self5} and with $k=n+2=3$ to get \eqref{self6}.
\end{proof}

To the best of our knowledge, these inequalities appear to be new.

\vspace{0,2cm}
\emph{Acknowledgements}. The author is supported by a fellowship of the International Program of Excellence in Mathematics at Universidad Aut\'onoma de Madrid (422Q101) and also partially supported by MINECO grant MTM2012-37436-C02-02, Spain. This work forms part of his Ph.D. thesis at UAM under the supervision of professor Dragan Vukoti\'c. The author would like to thank him for his encouragement and help. 

Moreover, the author would like to thank the referee for pointing out the simple deduction of Brown's theorem \cite{B10} and suggesting the alternative approach to Theorem \ref{T2} via the method of Delsarte and Genin \cite{DeGe}. 


\end{document}